\documentclass[twoside,11pt]{amsart}

\usepackage[english]{babel}
\usepackage[utf8x]{inputenc}
\usepackage{url}
\usepackage{amsmath}
\usepackage{graphicx}
\usepackage[colorinlistoftodos]{todonotes}
\usepackage{amsmath,amscd,amsthm}
\usepackage{amstext, amsfonts, a4}
\usepackage{amssymb}
\usepackage{tikz-cd}
\usepackage{fancyhdr}
\usepackage{enumerate}
\usepackage{cleveref}
\usepackage{xcolor}

\numberwithin{equation}{section}
\newtheorem{thm}[equation]{Theorem}

\newtheorem{prop}[equation]{Proposition}
\newtheorem{cor}[equation]{Corollary}

\theoremstyle{definition}

\newtheorem{ex}[equation]{Example}

\renewcommand{\dim}{\operatorname{\mathsf{dim}}}
\renewcommand{\deg}{\operatorname{\mathsf{deg}}}

\newcommand\ind{\operatorname{\mathsf{ind}}}

\newcommand\Nrd{\operatorname{\mathsf{Nrd}}}

\newcommand\N{\operatorname{N}}

\newcommand{\can}{\operatorname{\mathsf{can}}}

\newcommand{\vf}{\varphi}
\newcommand{\mg}[1]{{#1}^{\times}}
\newcommand{\sq}[1]{{#1}^{\times 2}}
\newcommand{\scg}[1]{\mg{#1}/\sq{#1}}
\newcommand{\s}{\sigma}
\newcommand{\nat}{\mathbb{N}}

\newcommand{\la}{\langle}
\newcommand{\ra}{\rangle}
\newcommand{\lla}{\la\!\la}
\newcommand{\rra}{\ra\!\ra}

\renewcommand{\leq}{\leqslant}

\newcommand{\I}{\mathsf{I}}
\newcommand{\rr}{\mathbb{R}}

\newcommand{\mc}{\mathcal}
\renewcommand{\N}{\mathsf{N}}

\newcommand{\disc}{\mathsf{disc}}
\newcommand{\Sim}{{\bf\mathsf{Sim}}}
\newcommand{\PSim}{{\bf\mathsf{PSim}}}
\newcommand{\G}{\mathsf{G}}
\renewcommand{\H}{\mathsf{H}}

\newcommand{\Z}{\mathsf{Z}}
\newcommand{\Hyp}{\mathsf{Hyp}}

\newcommand{\ovl}{\overline}
\renewcommand{\setminus}{\smallsetminus}

\renewcommand{\deg}{\mathsf{deg}}
\renewcommand{\dim}{\mathsf{dim}}
\renewcommand{\leq}{\leqslant}

\newcommand{\D}{\mathsf{D}}
\newcommand{\Cl}{\mc{C}}

\newcommand{\zz}{\mathbb{Z}}

\renewcommand{\setminus}{\smallsetminus}

\makeatletter
\newcommand{\bigperp}{%
  \mathop{\mathpalette\bigp@rp\relax}%
  \displaylimits
}

\newcommand{\bigp@rp}[2]{%
  \vcenter{
    \m@th\hbox{\scalebox{\ifx#1\displaystyle2.1\else1.5\fi}{$#1\perp$}}
  }%
}
\makeatother

\title[Non-$R$-trivial proper projective similitudes]{Non-$R$-trivial proper projective similitudes\\ in type $A_3\equiv D_3$}

    \date{9 May, 2026}

\author{M.~Archita}
\author{Karim Johannes Becher}

\address{University of Antwerp, Department of Mathematics, Antwerp, Belgium.}

\email{archita.mondal@uantwerpen.be}
\email{karimjohannes.becher@uantwerpen.be}
\begin{document}

 \maketitle

\begin{abstract}
Over an arbitrary field  of characteristic different from $2$ admitting an anisotropic torsion $3$-fold Pfister form, we apply a construction due to Merkurjev to produce an algebra with orthogonal involution of degree $6$ which admits proper projective similitudes that are not $R$-trivial. 
In particular, such examples exist over every finitely generated transcendental extension of a local or global number field, as well as over every finitely generated extension of transcendence degree $3$ of $\rr$.

\medskip\noindent
{\sc Keywords:} 
Classical adjoint linear algebraic group, stably rational, $R$-equivalence, quadratic form, algebra with involution, quaternion algebra, hyperbolic, Pfister form, cohomological dimension

\medskip\noindent
{\sc Classification (MSC 2020):} 11E04, 
11E57, 
11E81, 
14E08, 
20G15 
\end{abstract}

\section{Introduction}

Let $K$ be a field of characteristic different from $2$.
In \cite{Mer96}, A.~Merkurjev studied $R$-equivalence for classical groups $G$ of adjoint type over $K$.
Presenting $G$ as the group of proper projective similitudes of a $K$-algebra with involution $(A,\s)$, he described the group of  $R$-equivalence classes $G(K')/R$ for field extensions $K'/K$ in terms of a quotient of two subgroups of $\mg{K'}$ related to  factors of similitude of $\s$ and norms from finite extensions of $K'$ where the involution becomes hyperbolic. (See \Cref{Mer:T1} below.)

Merkurjev's work had been followed up in various directions.
Firstly, using Merkurjev's criterion, $R$-triviality of $G$ was established for certain types of groups; see e.g. \cite{PTW12} and \cite{AB25}.
Secondly, many examples of non-$R$-trivial groups $G$ were constructed; see e.g. \cite{Gil97}, \cite{BMT04}, \cite{Bha14} and \cite{AB25}. The wealth of examples suggest that $R$-triviality might be rather the exception than the default.
Thirdly, sufficient conditions on the base field $K$ were considered  for having triviality of $G(K)/R$ for adjoint classical groups of given types; see e.g.~\cite{KP08}, \cite{PS15}, \cite{PS17}, \cite{AP22} and \cite{AB26}. 

In \cite[Theorem~3]{Mer96}, $R$-triviality is completely characterized in terms of invariants for adjoint groups of type $A_3\equiv D_3$. 
In this article we exhibit sufficient conditions on the field $K$ for the presence of adjoint groups $G$ of type $A_3\equiv D_3$ over $K$ such that $G(K)/R$ is nontrivial.
This is stronger than having that $G$ is not $R$-trivial, since for the latter, the existence of a (possibly transcendental) extension $K'/K$ with $G(K')/R\neq \{1\}$ suffices.
We apply a construction in \cite{Mer96} to obtain examples of such groups $G$ of type $A_3\equiv D_3$ with $G(K)/R\neq\{1\}$ over any field $K$ that admits an anisotropic torsion $3$-fold Pfister form.

\section{Algebras with involution and similitudes}

By a \emph{$K$-algebra with involution} we mean a pair $(A,\s)$ where
$A$ is a finite-dimensional $K$-algebra endowed with an $K$-linear involution $\s$ such that $A$ has no nontrivial two-sided ideals $I$ with $\s(I)=I$ and, denoting by $\Z(A)$ is the center of $A$, we have $K=\{x\in\Z(A)\mid \s(x)=x\}$.
It follows that $\dim_KA=[\Z(A):K]\cdot n^2$ for some integer $n\in\nat$, denoted by $\deg A$ and called the \emph{degree of $A$}.
Algebras with involution are distinguished into three types: \emph{orthogonal}, \emph{symplectic} and \emph{unitary}; we refer to \cite[\S 2]{KMRT98} for details.

Let $(A, \sigma)$ be a $K$-algebra with an 
involution. An element $a\in \mg{A}$ is said to be a 
{\it similitude} if $\sigma(a)a \in \mg{K}$. 
The similitudes of $(A, \sigma)$ form a subgroup of $\mg{A}$, which is denoted by 
$\Sim(A, \sigma)$. We refer to 
\cite[\S 12]{KMRT98} for the basics on similitudes. 
We obtain a group homomorphism 
$$\mu: \Sim(A, \sigma) \to \mg{K},\quad  a\mapsto \sigma(a)a\,,$$
called the \emph{multiplier map}.
The image of this map is denoted by $\G(A, \sigma)$. 
Hence $$\G(A,\s)=\{x\in\mg{K}\mid x=\s(a)a\mbox{ for some }a\in\mg{A}\}\,.$$
We set $\PSim(A,\s)=\Sim(A,\s)/\mg{K}$ and call this the group of \emph{projective similitudes of $(A,\s)$}.
Note that $\PSim(A,\s)$ is naturally isomorphic to
the group of automorphisms of $(A,\s)$, in view of the Skolem-Noether Theorem.

If $\deg A=2m$ with $m\in\nat$ and $\s$ is orthogonal then, for $a\in\Sim(A,\s)$, we have 
$\Nrd_A(a)=\pm a^m$, and we set 
$$\Sim^+(A,\s) = \{a\in\Sim(A,\s)\mid \Nrd_A(a)=a^m\}\,.$$
If $\deg A$ is odd or $\s$ is not orthogonal, then we set $\Sim^+(A,\s)=\Sim(A,\s)$.
Hence $\Sim^+(A,\s)$ is always a subgroup of $\Sim(A,\s)$ of index $1$ or $2$.
We obtain corresponding subgroups 
$\PSim^+(A,\s)=\Sim^+(A,\s)/\mg{K}$ of $\Sim(A,\s)$ and $\G^+(A,\s)=\mu(\Sim^+(A,\s))$ of $\G(A,\s)$.

For a field extension $K'/K$, we obtain from $(A,\s)$ by scalar extension an $K'$-algebra with involution $(A_{K'},\s_{K'})$.
One obtains naturally a group scheme ${\bf PSim}^+(A,\s)$ such that, for any field extension $K'/K$, the set of $K'$-rational points 
${\bf PSim}^+(A,\s)(K')$ is given by $\PSim^+(A_{K'},\s_{K'})$.
Note that ${\bf PSim}^+(A,\s)$ is an adjoint, semi-simple, connected linear algebraic group over $K$.

Assume that $(A,\s)$ is a $K$-algebra with orthogonal involution of even degree.
We denote by $\Cl(A,\s)$ the \emph{Clifford algebra} of $(A,\s)$; see \cite[\S 8]{KMRT98} for the definition.
The center of $\Cl(A,\s)$ is a quadratic \'etale extension of $K$, hence of the form $K[X]/(X^2-d)$ for some $d\in\mg{K}$, and the square-class $d\sq{K}\in\scg{K}$ is uniquely determined by $\s$, denoted by $\disc(\s)$ and called the \emph{discriminant of $\s$}.

Consider two $K$-algebras with involution $(A_1, \s_1)$ and $(A_2,\s_2)$. 
We call $(A,\s)$ \emph{an orthogonal sum of  $(A_1, \s_1)$ and $(A_2,\s_2)$} if there exist $e\in A$ with $\s(e)=e$ and $e^2=e$ and $K$-algebra isomorphisms, $$\vf_1:A_1\xrightarrow{\sim}eAe\quad\mbox{ and }\quad \vf_2:A_2\xrightarrow{\sim}(1-e)A(1-e)$$
such that $ \vf_i\circ \s_i=\s\circ \vf_i$ for $i=1,2$. 
Note that this implies that $A\sim A_1\sim A_2$ and that $\s_1$ and $\s_2$ are of the same type as $\s$.
If $(A,\s)$ is an orthogonal sum of $(A_1,\s_1)$ and $(A_2,\s_2)$, we also indicate this by writing
$$(A,\s)\in (A_1,\s_1)\boxplus (A_2,\s_2)\,.$$

\section{$R$-triviality in groups of type $A_3\equiv D_3$}

The study of $R$-equivalence for semisimple adjoint classical groups 
can be reduced to such ones that are absolutely simple.
As a consequence of A.~Weil's classification results in \cite{Wei61}, any absolutely simple adjoint classical group over $K$ is 
given by $\mathbf{PSim}^+(A,\s)$ for some $K$-algebra with involution $(A,\s)$.
If $A$ is of even degree $2n$ and $\s$ is an orthogonal involution, then this group is of Dynkin type $D_n$.

\medskip

For a finite field extension $L/K$, we write 
$\N_{L/K}:L\to K$ for the norm map, and we abbreviate $\N_{L/K}^\ast=\N_{L/K}(\mg{L})$.
Similarly, for a central simple $K$-algebra $A$, we write 
$\Nrd_A:A\to K$ for the reduced norm map, and we abbreviate $\Nrd_A^\ast=\Nrd_A(\mg{A})$.

Let $(A,\s)$ be a $K$-algebra with orthogonal or symplectic involution.
We denote by $\Hyp(A,\s)$ the subgroup of $\mg{K}$ generated by the subsets $\N^\ast_{L/K}$ where $L/K$ ranges over the finite field extensions for which $\s_{L}$ is hyperbolic.
We set $$\H(A,\s)=\sq{K}\cdot\Hyp(A,\s)\,.$$
Merkurjev studies the 
group of $R$-equivalence classes of  
${\bf PSim}^+ (A, \sigma)$ by means of the following translation.

\begin{thm}[Merkurjev \hbox{\cite[Theorem 1]{Mer96}}]
\label{Mer:T1}
We have $\H(A,\sigma)\subseteq\G^+(A,\sigma)$ and
$$ 
{\bf PSim}^+(A,\sigma)/R \simeq  
\G^+(A,\sigma)/\H(A,\sigma).  
$$ 
In particular, ${\bf PSim}^+(\s)$ is $R$-trivial if and only if $\G^+(A_{K'},\s_{K'})=\H(A_{K'},\s_{K'})$ holds for every field extension $K'/K$.
\end{thm}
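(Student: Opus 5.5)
Since the statement is Merkurjev's theorem, quoted here as background, the plan is to follow the strategy of \cite{Mer96}. Write $G={\bf PSim}^+(A,\s)$ and $\widetilde G={\bf Sim}^+(A,\s)$. These fit into an exact sequence of algebraic groups
$$1\to \mathbb{G}_m\to \widetilde G\to G\to 1\,.$$
By Hilbert~90, $H^1(K,\mathbb{G}_m)=0$, so $G(K)=\widetilde G(K)/\mg{K}$. The multiplier $\mu$ then induces a surjection $G(K)\to \G^+(A,\s)/\sq{K}$, because $\mu(\lambda)=\lambda^2$ for $\lambda\in\mg{K}$.

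\emph{Step 1: $\H(A,\s)\subseteq \G^+(A,\s)$ and elements of $\H$ are $R$-trivial.} Let $L/K$ be finite with $\s_L$ hyperbolic. Then the multipliers of $\Sim^+(A_L,\s_L)$ are all of $\mg{L}$: in the split case use $\mathrm{diag}(1,\dots,1,x,\dots,x)$ on a hyperbolic space. We need to descend norms $\N_{L/K}(x)$ to $\G^+(A,\s)$ together with an $R$-equivalence to $1$. For this I would use that, since $\s_L$ is hyperbolic, the variety $\widetilde G$ contains a rational subvariety over $L$ through $1$ hitting every multiplier. By a norm (corestriction) principle for $\mu$ on the rational group $\widetilde G$, every $\N_{L/K}(x)$ is a multiplier of some similitude $R$-equivalent to $1$ in $G(K)$. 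Squares come from scalars. So $\H\subseteq\G^+$, and elements of $\H$ map to $R$-trivial classes.

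\emph{Step 2: $R$-equivalence classes are detected by $\mu$ modulo $\H$.} We get a surjection $G(K)\to\G^+/\H$ whose kernel contains $RG(K)$. One must check that it factors through $R$-equivalence, i.e.\ that the map is constant along rational curves. For a rational path $f(t)$ in $G$, one lifts to $\widetilde G$ over $K(t)$, using Hilbert~90 again. Then $\mu(f(t))$ is a rational function whose values on $K$-points must lie in a single $\H$-coset. The tool is Merkurjev's specialization argument: write $\mu(f(t))$ in $\G^+(A_{K(t)})$ and factor it into irreducible polynomials. For each irreducible $p$ of odd valuation, residue considerations show that $\s$ becomes hyperbolic over $K[t]/(p)$. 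Then evaluating at two points differs by norms from such fields, which lie in $\H$.

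\emph{Step 3: the kernel of $G(K)\to\G^+/\H$ is exactly $RG(K)$.} This is the main obstacle. Take $a\in\Sim^+$ with $\mu(a)\in\H$; we may adjust $a$ by Step 1 to assume $\mu(a)=1$. Then $a\in\mathbf{O}^+(A,\s)(K)$, and it remains to show that proper isometries have trivial image in $G(K)/R$. My approach: use the Cayley parametrization. It makes $\mathbf{O}^+$ birational to the affine space of skew elements, so each $a$ with $1+a$ invertible is $R$-trivial. Since $\mathbf{O}^+(K)$ is generated by such elements (products of two Cayley-parametrizable ones, as the open set is dense and $K$ is infinite), this gives $R$-triviality in $\mathbf{O}^+$, hence in $G$. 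The delicate point is that $\mathbf{O}^+$ itself need not be $R$-trivial in general (spinor norms intervene). Spinor norm obstructions must therefore be absorbed by the $\H$-part: multipliers in $\H\cap\sq{K}$. I would handle this by working in $G$ rather than $\mathbf{O}^+$, where scaling kills the spinor norm ambiguity, checking carefully via the Clifford group.

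Finally, the ``in particular'' clause follows by applying the isomorphism over every extension $K'/K$, since $R$-triviality means $G(K')/R=1$ for all $K'$.
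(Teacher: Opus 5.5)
The paper does not prove this statement; it quotes it from \cite[Theorem~1]{Mer96}. Your skeleton is the right architecture: pass to $\widetilde G=\mathbf{Sim}^+(A,\s)$ via Hilbert~90, kill $\mu^{-1}(1)=\mathbf{O}^+(A,\s)$ by the Cayley transform, and control $\mu$ along rational curves by residues at primes of odd valuation. However, Step~1 has a genuine gap, and it is exactly where the first assertion $\H(A,\s)\subseteq\G^+(A,\s)$ lives.

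You appeal to ``a norm (corestriction) principle for $\mu$''. There is no corestriction on the nonabelian group $\widetilde G(L)$. The statement that every element of $\N_{L/K}(\mg{L})$ is the multiplier of a proper similitude of $(A,\s)$ whenever $\s_L$ is hyperbolic is a real theorem: a Scharlau--Knebusch type norm principle for multipliers of the hermitian form $h$ with $\s=\mathrm{ad}_h$. It is not a formal consequence of anything you have set up. Your justification is also wrong as written. $\widetilde G$ is not a rational group over $K$: it is birational to $G\times\mathbb{G}_m$, so its rationality would force $G(K)/R=\{1\}$, contradicting the examples in this very paper. It becomes rational only over $L$, where $\widetilde G_L\simeq\mathbf{O}^+_L\rtimes\mathbb{G}_{m,L}$, and nothing in your argument moves similitudes or rational curves from $L$ down to $K$.

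Moreover, even granting $\N_{L/K}(\mg{L})\subseteq\G^+(A,\s)$, the $R$-triviality of the corresponding classes needs a further argument. One way to supply it:
\begin{enumerate}
\item Apply the norm principle over $E=K(R_{L/K}\mathbb{G}_m)$ to the generic element. This gives a rational map $h\colon R_{L/K}\mathbb{G}_m\dashrightarrow\widetilde G$ with $\mu\circ h=\N_{L/K}$.
\item Replace $h(y)$ by $h(yy_0)h(y_0)^{-1}$ for a $K$-point $y_0$ where $h$ is defined, so that $h(1)=1$.
\item Since $R_{L/K}\mathbb{G}_m$ is open in affine space, $h(y)$ is joined to $1$ by a line for all $y$ in a dense open set.
\item Every $x\in\mg{L}$ is a product of two such $y$.
\end{enumerate}

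Two smaller points. First, in Step~3 your closing remark is false and contradicts the sentence before it. The Cayley transform makes $\mathbf{O}^+(A,\s)$ a rational variety, hence $R$-trivial over every extension. The spinor norm obstructs lifting to the spin group, not $R$-equivalence. So no ``absorption by $\H$'' is needed, and Step~3 is complete once Step~1 is.

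Second, in Step~2, for nonsplit $A$ the claim that odd valuation of $\mu(\tilde f)$ at $p$ forces $\s_{K[t]/(p)}$ to be hyperbolic needs a Springer-type residue theorem for hermitian forms over the Azumaya algebra $A\otimes_K K[t]_{(p)}$; you should supply or cite one. Also, choose the lift $\tilde f$ in $\widetilde G$ of the semilocal ring at $\{0,1\}$. Then the values of $\mu(\tilde f)$ at $0$ and $1$ really are the multipliers of lifts of $f(0)$ and $f(1)$.
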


This provides tool to compute $R$-equivalence on semisimple adjoint classical groups and to construct examples of nonrational adjoint groups. 
The simplest examples occur for algebras with orthogonal involutions of degree $6$.
In the split case, where $(A,\s)$ is adjoint to a regular $6$-dimensional quadratic form $\eta$ over $K$, \cite[Theorem 2]{Mer96} yields that ${\bf PSim}^+(A,\s)$ is $R$-trivial if and only if $\mc{C}(A,\s)$ (the even Clifford algebra of $\eta$) is not a division algebra.
Using this one obtains examples over fields of cohomological dimension $3$ where $\mathbf{PSim}^+(A,\s)$ is not $R$-trivial.
However, there seems to be no such example known where $\mathbf{PSim}^+(A,\s)(K)\neq \{1\}$ over a field of cohomological dimension $3$ with $A$ split.
We now turn our attention to the nonsplit case.

\begin{prop}\label{D3-ex-construction}
Let $(A,\s)$ be a $K$-algebra with orthogonal involution of degree $6$ and of nontrivial discriminant. Let $C=\Cl(A,\s)$ and $L=\Z(C)$. Assume that $\ind(C)=2$ and $A$ is not split. 
Then there exist $K$-quaternion algebras $Q,Q_1,Q_2$ and an orthogonal involution $\gamma$ on $Q$ such that $L$ is contained in $Q$ and $\gamma$ extends the nontrivial automorphism of $L/K$ and such that $A\sim Q\sim Q_1\otimes Q_2$ and $$(A,\s) \in (Q_1,\can_{Q_1})\otimes (Q_2,\can_{Q_2}) \boxplus (Q,\gamma)\,.$$
\end{prop}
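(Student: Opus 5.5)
A natural approach is to decompose $(A,\s)$ using an isotropy argument on a degree-$4$ piece, via the exceptional isomorphism $A_3\equiv D_3$. By \cite[\S 15]{KMRT98}, $(A,\s)$ corresponds to the Clifford algebra $C=\Cl(A,\s)$, a central simple $L$-algebra of degree $4$ with a unitary involution $\underline\s$ of the second kind. Under this correspondence $A$ is Brauer equivalent to the discriminant algebra of $(C,\underline\s)$, equivalently to the corestriction $N_{L/K}(C)$. Since $\ind(C)=2$, we have $C\simeq M_2(D)$ for an $L$-quaternion division algebra $D$, and $\underline\s$ is adjoint to a hermitian form $h$ of rank $2$ over $(D,\tau)$, where $\tau$ is a unitary involution on $D$. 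Since $A$ is not split, $N_{L/K}(D)$ is nontrivial in $\mathrm{Br}(K)$.

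First, using $\tau$ (Albert--Riehm), I will write $D\simeq Q_0\otimes_K L$ for a $K$-quaternion algebra $Q_0$ with $\tau=\can_{Q_0}\otimes\iota$, where $\iota$ is the nontrivial automorphism of $L/K$. Then $N_{L/K}(D)\sim Q_0\otimes Q_0\sim K$ would force $A$ split. So this naive route is wrong, and I will instead work directly on the orthogonal side.

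The plan there is to find a $\s$-symmetric idempotent $e\in A$ with $eAe$ of degree $2$ and a convenient restricted involution. Write $A=M_3(Q)$ with $Q$ a division quaternion algebra, which is possible since $A\neq$ split and $\deg A=6$ forces $\ind A=2$. Then $\s$ is adjoint to a skew-hermitian form $\psi=\la a_1,a_2,a_3\ra$ over $(Q,\can)$ with $a_i$ pure quaternions, and $\disc\s=\N(a_1)\N(a_2)\N(a_3)$ up to sign conventions. Splitting off $\la a_3\ra$ gives $(A,\s)\in(A',\s')\boxplus(Q,\gamma)$ with $\gamma=\mathrm{Int}(a_3^{-1})\circ\can$ orthogonal on $Q$, where $(A',\s')$ has degree $4$ and is adjoint to $\la a_1,a_2\ra$. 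By \cite[(7.4)]{KMRT98}, $\disc\gamma=-\N(a_3)\sq K$, and $\gamma$ acts as $-1$ on $K(a_3)$ and fixes $K$. So $\gamma$ extends the nontrivial automorphism of $L$ precisely when $K(a_3)\simeq L$, i.e.\ when $\disc\gamma=\disc\s$ in a suitable sense. If that holds, additivity of discriminants gives $\disc\s'$ trivial. By \cite[(15.12)]{KMRT98}, a degree-$4$ orthogonal involution with trivial discriminant decomposes as $(Q_1,\can)\otimes(Q_2,\can)$, where $Q_1\times Q_2$ is its Clifford algebra and $Q_1\otimes Q_2\sim A'\sim Q$.

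The key step, and the main obstacle, is choosing the diagonalization so that $\la a_3\ra$ has $K(a_3)\simeq L$. This means $\psi$ must represent a pure quaternion $a$ with $-\N(a)\in\disc\s$, i.e.\ $L$ must embed in $Q$ compatibly with $\psi$. Here I will use $\ind C=2$. Over $L$ the discriminant becomes trivial, and $C_L\simeq C_+\times C_-$ with $C_+\simeq C$ of index $2$. Hence $\s_L$ corresponds to a degree-$4$ Clifford picture in which $(A_L,\s_L)$ is isotropic: a quadratic-form-type argument with index $\le2$ in $C$ gives a nontrivial isotropic right ideal of reduced dimension $\ge1$. A transfer/Springer-type argument then yields a pure quaternion $a\in Q$ with $a^2\in d\sq K$, where $d=\disc$, represented by $\psi$. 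Concretely, I would argue via \cite[Prop.~(15.?)]{KMRT98} that $\psi_L$ is isotropic exactly when $C$ is not division. Then, by the standard descent for skew-hermitian forms over quaternions (Parimala--Sridharan--Suresh), $\psi$ contains a subform $\la a\ra$ with $K(a)\simeq L$. This gives $L\subseteq Q$ and $\gamma$ as required, and the remaining assertions $A\sim Q\sim Q_1\otimes Q_2$ follow from Brauer equivalence in orthogonal sums.
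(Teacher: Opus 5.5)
Your skeleton is sound: with $A=M_3(Q)$ and $\s=\mathrm{ad}_\psi$, once you have $\psi\simeq\la a\ra\perp\psi'$ with $K(a)\simeq L$, the discriminant bookkeeping and the decomposition of degree-$4$ involutions with trivial discriminant give the statement. This would be a self-contained alternative to the paper, which only refers to \cite[p.~207--208]{Mer96}. The gap is in the step you call the main obstacle: it is asserted rather than proved, and its logic runs backwards. A descent statement of the form ``$\psi_L$ isotropic $\Rightarrow$ $\psi$ has a subform $\la a\ra$ with $K(a)\simeq L$'' can only hold when $L$ already embeds in $Q$. If $L\not\subseteq Q$, no pure quaternion generates $L$, yet $\psi_L$ may well be isotropic (e.g.\ whenever $\psi$ is). So you cannot obtain $L\subseteq Q$ as an output of that descent; it has to be an input, and your ``quadratic-form-type argument'' for the isotropy of $\s_L$ tacitly needs it as well.

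The missing ingredient is the fundamental relation $C^{\otimes 2}\sim A_L$ for orthogonal involutions of degree $2m$ with $m$ odd \cite[(9.12)]{KMRT98}. Since $\ind C=2$, this makes $A_L$ split, so $L=K(\lambda)$ for a pure $\lambda\in Q$ with $\lambda^2=d$. This is also the real error in your first paragraph. $A$ is not Brauer equivalent to $\N_{L/K}(C)$; the latter is always split, since $C$ carries a unitary involution. The Albert descent $D\simeq Q_0\otimes_K L$ was not the culprit.

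With $L\subseteq Q$ available, you can complete the key step as follows.

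\emph{Isotropy of $\psi_L$.} Since $A_L$ is split, $\s_L$ is adjoint to a $6$-dimensional form of trivial discriminant over $L$ whose even Clifford algebra is $C\otimes_KL\simeq C\times{}^{\iota}C$. As $C$ is not division, this Albert form is isotropic, hence so is $\psi_L$.

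\emph{Descent.} Pick a pure $j\in Q$ with $j\lambda=-\lambda j$ and write $\psi=\psi_1+j\psi_2$ with $\psi_1,\psi_2$ taking values in $L$. Then $\psi_2$ is a symmetric $L$-bilinear form on the $6$-dimensional $L$-space $V$, and $\psi_1(x,x)\in K\lambda$. Hence $\psi_2(x,x)=0$ if and only if $\psi(x,x)\in K\lambda$. Identify $L$ with $K(\delta)$, $\delta^2=d$; the idempotent $\epsilon=\frac12(1+\lambda\otimes\delta^{-1})\in Q\otimes_KL$ shows that $\psi_2$ is the Morita transform of $\psi_L$. So isotropy of $\psi_L$ yields $x\neq0$ with $\psi(x,x)\in K\lambda$. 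If $\psi(x,x)\neq0$, split off $\la\psi(x,x)\ra$. If $\psi(x,x)=0$, then $\psi$ contains a hyperbolic plane, which represents $\lambda$. In both cases $\psi\simeq\la a\ra\perp\psi'$ with $a\in\mg{K}\lambda$, and the rest of your argument goes through.
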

\begin{proof}
    This is explained in \cite[p.~207-208]{Mer96}.
\end{proof}

The following statement corresponds to \cite[Prop.~9]{Mer96}, where it is stated in a context, not highlighting the minimal hypotheses. 
We include a proof.

\begin{thm}[Merkurjev] 
\label{T:MerkProp9}
    Let $Q,Q_1,Q_2$ be $K$-quaternion  algebras such that $Q_1\otimes Q_2\sim Q$.
    Let $L/K$ be a quadratic field extension contained in $Q$ and $\gamma$ an orthogonal involution on $Q$ extending the nontrivial automorphism of $L/K$.
    Let $(A,\s)\in (Q_1,\can_{Q_1})\otimes (Q_2,\can_{Q_2})\boxplus (Q,\gamma)$ and $C=\Cl(A,\s)$.
    Then $C$ is a central simple $L$-algebra
with $C\sim (Q_1)_L\simeq (Q_2)_L$, and we have 
$$\G^+(A,\s)  = \N_{L/K}^\ast\cap(\Nrd_{Q_1}^\ast\cdot \Nrd_{Q_2}^\ast) \quad\mbox{ and }\quad
\H(A,\s) = \N^\ast_{L/K}\cap \Nrd^\ast_{Q_k}$$
for $k=1,2$, and in partiuclar
    $${\bf PSim}^+(A,\s)(K)/R\,\,\simeq \,\,\dfrac{\N_{L/K}^\ast\cap(\Nrd_{Q_1}^\ast\cdot \Nrd_{Q_2}^\ast)}{\N^\ast_{L/K}\cap \Nrd^\ast_{Q_k}}\,\,.$$
\end{thm}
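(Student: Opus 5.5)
We will work throughout with the general structure theory of degree $6$ orthogonal involutions in \cite[\S 15]{KMRT98}. The exceptional isomorphism $A_3\equiv D_3$ identifies $(A,\s)$ with $(C,\underline{\s})$, where $\underline{\s}$ is the canonical unitary involution on $C=\Cl(A,\s)$. It also identifies $\G^+(A,\s)$ with the set of $x\in\mg{K}$ such that $x=\underline{\s}(c)c$ for some $c\in\mg{C}$ with $\Nrd_C(c)\in\mg{K}$ (up to the usual normalisation). Our plan has three steps, taken in order.

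\emph{Step 1: the Clifford algebra.} First we determine $L=\Z(C)$ and the Brauer class of $C$. Discriminants add under orthogonal sums. The discriminant of $(Q_1,\can)\otimes(Q_2,\can)$ is trivial, since this is the adjoint involution of the Albert form of $Q_1\otimes Q_2$, up to the standard identification. The discriminant of $(Q,\gamma)$ is the class $d$ with $L=K(\sqrt d)$, because $\gamma$ acts as the nontrivial automorphism on $L$. Hence $\Z(C)\simeq L$. For the Brauer class we use the formula for Clifford algebras of orthogonal sums \cite[\S 8]{KMRT98}, or alternatively compute after extending scalars to $L$. Over $L$, $\gamma_L$ becomes hyperbolic with trivial discriminant. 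The first summand has Clifford algebra $Q_1\times Q_2$. Combining these gives $C\sim (Q_1)_L$. Since $Q_L$ is split, we have $(Q_1)_L\simeq(Q_2)_L$.

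\emph{Step 2: computing $\G^+$.} We will show two inclusions.

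For $\subseteq$: take $x\in\G^+(A,\s)$. By \cite[(13.38)]{KMRT98}, or the fundamental relations for degree $6$, the multiplier $x$ of a proper similitude satisfies $x\in\N_{L/K}^\ast$, because the induced automorphism of $C$ gives $x^3\cdot\N_{L/K}(\Nrd_C(c))^{-1}$ a square in a suitable sense, and $x^2\in\N^\ast_{L/K}$. Moreover, proper multipliers lie in $\Nrd^\ast_{Q_1}\cdot\Nrd^\ast_{Q_2}$. To see this, write a similitude in terms of the decomposition and use that for $Q_1\otimes Q_2$ the proper multipliers of $\can\otimes\can$ are exactly $\Nrd_{Q_1}^\ast\Nrd_{Q_2}^\ast$.

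For $\supseteq$: given $x=\N_{L/K}(\ell)=\Nrd_{Q_1}(q_1)\Nrd_{Q_2}(q_2)$, we build a proper similitude as a block-diagonal element. On the first block we take $q_1\otimes q_2$, which has multiplier $x$. On the second block we take an element of $Q$ with $\gamma(a)a=x$; such an element exists since the multipliers of $(Q,\gamma)$ are $\N^\ast_{L/K}$, taking $a\in L$. We then adjust so that the element is proper, using the freedom of multiplying $q_i$ by pure quaternions.

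\emph{Step 3: computing $\H$.} By Merkurjev's Theorem~\ref{Mer:T1}, $\H\subseteq\G^+$. For $\H\supseteq\N^\ast_{L/K}\cap\Nrd^\ast_{Q_1}$, we will realise each such $x$ as a norm from a splitting field where $\s$ becomes hyperbolic. Over $M\subseteq Q_1$ quadratic, both summands become hyperbolic-compatible, and the norm principle for reduced norms applies. The reverse inclusion comes from the observation that if $\s_M$ is hyperbolic then $C_M$ has a split component, so $M$ splits $Q_1$. Moreover, $L\otimes M$ must also be handled, which forces $\N^\ast_{M/K}\subseteq \Nrd^\ast_{Q_1}\cap\N^\ast_{L/K}$ up to squares. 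We expect this reverse inclusion $\Hyp\subseteq\N_{L/K}^\ast\cap\Nrd^\ast_{Q_1}$ to be the main obstacle. We would try to use the criterion that $\s_M$ is hyperbolic if and only if $A_M$ has index at most $2$ and $C_M$ has a split factor \cite[\S 15]{KMRT98}. Then we would apply the norm principle for $\N_{L/K}$ and $\Nrd_{Q_1}$ via $L\otimes_K M$.
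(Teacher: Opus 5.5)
\textbf{There is a genuine gap, in Step~3.} It lies in the direction you treat as routine, namely $\N^\ast_{L/K}\cap\Nrd^\ast_{Q_1}\subseteq\H(A,\s)$. For $x$ in this intersection you pass to a quadratic $M\subseteq Q_1$ with $x\in\N^\ast_{M/K}$. But $\s_M$ is not hyperbolic: $\disc(\s_M)=d\sq{M}$ stays nontrivial unless $M\simeq L$. The norm principle does not help either, since it only gives inclusions \emph{into} $\Nrd^\ast_{Q_1}$, which is the wrong direction here.

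What is missing is a way to show that $x$ is, up to squares, a norm from a field over which $\s$ becomes hyperbolic. The paper does this in two cases:
\begin{enumerate}[(i)]
\item If $(Q_1)_L$ is split, then $\s_L$ is hyperbolic and $x\in\N^\ast_{L/K}\subseteq\Hyp(A,\s)$.
\item Otherwise $M$ and $L$ are linearly disjoint, and $\s$ becomes hyperbolic over the biquadratic field $LM$. The key lemma $\N^\ast_{L/K}\cap\N^\ast_{M/K}=\sq{K}\cdot\N^\ast_{LM/K}$ from \cite[Lemma 2.11]{PTW12} then gives $x\in\sq{K}\cdot\Hyp(A,\s)$. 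The nontrivial half of \cite[Lemma 10]{Mer96} would serve equally well.
\end{enumerate}

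By contrast, the inclusion you single out as the main obstacle, $\Hyp(A,\s)\subseteq\N^\ast_{L/K}\cap\Nrd^\ast_{Q_1}$, is the easy one, and your own ingredients suffice. If $\s_M$ is hyperbolic, then $\disc(\s_M)$ is trivial, so $L$ embeds in $M$ and $\N^\ast_{M/K}\subseteq\N^\ast_{L/K}$; no separate treatment of $L\otimes M$ is needed. Also $C\otimes_LM$ is split, so $M$ splits $Q_1$, and $\N^\ast_{M/K}\subseteq\Nrd^\ast_{Q_1}$ by the norm principle. This is slightly more direct than the paper's route through $\N_{L/K}(\Nrd^\ast_C)$.

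\textbf{A second, smaller gap is in Step~2,} in the inclusion $\G^+(A,\s)\subseteq\Nrd^\ast_{Q_1}\cdot\Nrd^\ast_{Q_2}$. A similitude of $(A,\s)$ need not preserve the decomposition $eAe\oplus(1-e)A(1-e)$, so you cannot restrict it to the block $(B,\tau)=(Q_1,\can)\otimes(Q_2,\can)$. Instead, use what you already established, $x\in\N^\ast_{L/K}=\G^+(Q,\gamma)$:
\begin{itemize}
\item Write $\s=\mathrm{ad}_h$ with $h\simeq h_B\perp h_Q$.
\item The isometries $xh\simeq h$ and $xh_Q\simeq h_Q$ give $xh_B\simeq h_B$ by Witt cancellation for hermitian forms, so $x\in\G(B,\tau)$.
\item Finally $\G(B,\tau)=\Nrd^\ast_{Q_1}\cdot\Nrd^\ast_{Q_2}$. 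Improper similitudes of $(B,\tau)$ exist only when $Q_1\simeq Q_2$, and there this is clear.
\end{itemize}
The paper likewise passes through $\G^+(Q,\gamma)=\N^\ast_{L/K}$ to reach $\G^+(A,\s)=\N^\ast_{L/K}\cap\G^+(B,\tau)$.

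In the reverse inclusion, no adjustment is needed. The block-diagonal element built from $q_1\otimes q_2$ and $\ell$ has reduced norm $x^2\cdot x=x^3$, so it is already proper.
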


\begin{proof}
Note that $\disc(\s)=\disc(\gamma)$, by \cite[Def.~7.2 \& Prop.~7.5]{KMRT98}.
    We fix $d\in\mg{K}$ such that $\disc(\gamma)=\disc(\s)=d\sq{K}$. 
    Then $L\simeq  K(\sqrt{d})$.
    
  Set $(B,\tau)=(Q_1,\can_{Q_1})\otimes (Q_2,\can_{Q_2})$.
    Then $\disc(\tau)$ is trivial, and therefore $\G^+(A,\s)\subseteq \N_{L/K}^\ast=\G^+(Q,\gamma)$.
    We conclude that 
    $\G^+(A,\s)=\N_{L/K}^\ast\cap \G^+(B,\tau)$.
    Since $B\sim Q$, we have that $\ind B\leq 2$.
    Hence
    \cite[Prop.~4.2~$(c)$]{AB26}
     yields that $\G^+(B,\tau)=\Nrd^\ast_{Q_1}\cdot \Nrd^\ast_{Q_2}$.
    This shows that $$\G^+(A,\s)=\N^\ast_{L/K}\cap \Nrd^\ast_{Q_1}\cdot \Nrd^\ast_{Q_2}\,.$$
    
Consider an arbitrary finite field extension $M/K$ such that $\sigma_{M}$ is hyperbolic. 
Then $\disc(\s_{M})$ is trivial and $C_{M}$ is split. 
Hence we may view $L$ as a subfield of $M$, and obtain by \cite[Prop.~4.1~$(b)$]{AB26} that $\N_{M/K}^\ast=\N_{L/K}(\N_{M/L}^\ast)\subseteq\N_{L/K}(\Nrd_C^\ast)$.
Having this for every finite field extension $M/K$ such that $\s_{M}$ is hyperbolic, we conclude that $\Hyp(\sigma)\subseteq\N_{L/K}(\Nrd_C^\ast)$.

By \cite[p.~93, Examples.~$(b)$]{KMRT98} we have $\Cl(B,\tau)=Q_1\times Q_2$.
Note that $Q_L$ is split and $\gamma_L$ is hyperbolic.
This yields that $C\sim (Q_1)_L\sim (Q_2)_L$.

Fix $k\in\{1,2\}$.
Then $\Nrd_C^\ast=\Nrd_{(Q_k)_L}^\ast$, by \cite[Prop.~4.1~$(a)$]{AB26}.
In particular $\Hyp(A,\sigma)\subseteq\N_{L/K}(\Nrd_C^\ast)=\N_{L/K}(\Nrd_{(Q_k)_L}^\ast)$.
    By \cite[Lemma 10]{Mer96}, we have 
    $$\H(A,\s)=\sq{K}\Hyp(\s)\subseteq \sq{K}\N_{L/K}(\Nrd_{(Q_k)_L}^\ast)= \N_{L/K}^\ast\cap \Nrd_{Q_k}^\ast.$$
    To show the converse inclusion, consider now $a\in\N_{L/K}^\ast\cap\Nrd_{Q_k}^\ast$.
    If $(Q_k)_L$ is split, then 
    $\s_L$ is hyperbolic and hence $a\in\Hyp(A,\s)$.
    Assume that $(Q_k)_L$ is not split.
    Since $a\in\Nrd_{Q_k}^\ast$, there exist a quadratic extension $L'/K$ such that 
    $(Q_k)_{L'}$ is split and $a\in\N_{L'/K}^\ast$.
    Then $L'/K$ is linearly disjoint from $L/K$, so we obtain a biquadratic field extension $M=L\otimes_KL'$ of $K$.
    By \cite[Lemma 2.11]{PTW12}, we have  $\N_{L/K}^\ast\cap \N_{L'/K}^\ast=\sq{K}\cdot \N_{M/K}^\ast$.
    Hence $a\in\sq{K}\cdot \N_{M/K}^\ast$.
    Since $\s_{M}$ is hyperbolic, we conclude that $a\in\sq{K}\cdot\Hyp_2(A,\s)\subseteq \H(A,\s)$.  
    This shows that $$\H(A,\s)=\N_{L/K}^\ast\cap \Nrd_{Q_k}^\ast\quad\text{ for }\quad k\in\{1,2\}\,.$$
The final part of the statement now follows by \Cref{Mer:T1}.
\end{proof}

\begin{prop}\label{P:3-Pfister-A3-example}
    Let $a,b,c\in\mg{K}$ be such that $-1\in\D_K{\lla a,b\rra}$ and
    $\lla a,b,c\rra$ is anisotropic.
Let $Q_1=(a,b)_K$, $Q_2=(-ac,b)_K$, $Q=(-c,b)_K$ and $L=K(\sqrt{-c})$. 
 Then $$c\in\left(\N_{L/K}^\ast\cap(\Nrd_{Q_1}^\ast\cdot \Nrd_{Q_2}^\ast)\right)\setminus (\Nrd_{Q_1}^\ast\cup\Nrd_{Q_2}^\ast)$$
and there exists an orthogonal involution $\gamma$ on $Q$ with $\disc(\gamma)=-c\sq{K}$.
Furthermore, there exists a $K$-algebra of degree $6$ with orthogonal involution $$(A,\s)\in \left((Q_1,\can_{Q_1})\otimes (Q_2,\can_{Q_2})\right)\boxplus (Q,\gamma)\,,$$ and for any such $(A,\s)$ we have ${\bf PSim}^+(A,\s)(K)/R\neq \{1\}$.
\end{prop}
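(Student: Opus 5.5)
The plan is to check the arithmetic claims about $c$ directly, using quadratic form computations with Pfister forms. Then I will build $\gamma$ and $(A,\s)$ explicitly and conclude with \Cref{T:MerkProp9}.

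First, the preliminary facts. Since $\lla a,b,c\rra$ is anisotropic, so is its subform $\lla c\rra=\la 1,-c\ra$. Hence $-c\notin\sq{K}$ and $L=K(\sqrt{-c})$ is a quadratic field extension. Computing in the Brauer group,
$$Q_1\otimes Q_2\sim (a,b)\otimes(-ac,b)\sim(-a^2c,b)=(-c,b)=Q.$$
Let $i,j$ be standard generators of $Q$ with $i^2=-c$, $j^2=b$ and $ij=-ji$. Then $L\simeq K(i)\subseteq Q$. I take $\gamma=\mathrm{Int}(i)\circ\can_Q$. This is an orthogonal involution because $i$ is a pure quaternion. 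On $L$ it gives $\gamma(i)=i(-i)i^{-1}=-i$, so it restricts to the nontrivial automorphism of $L/K$. By \cite[Prop.~7.3]{KMRT98}, $\disc(\gamma)=-\Nrd_Q(i)\sq{K}=i^2\sq{K}=-c\sq{K}$.

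Next, membership of $c$. We have $c=\N_{L/K}(\sqrt{-c})\in\N^\ast_{L/K}$. The norm form of $Q_1$ is $\lla a,b\rra$, so $-1\in\Nrd^\ast_{Q_1}$ by hypothesis. Taking pure quaternions with squares $a$ and $-ac$ gives $-a\in\Nrd^\ast_{Q_1}$ and $ac\in\Nrd^\ast_{Q_2}$. Since reduced norm groups contain $\sq{K}$, it follows that
$$c=(-1)(-a)\cdot ac\cdot a^{-2}\in\Nrd^\ast_{Q_1}\cdot\Nrd^\ast_{Q_2}.$$

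Now non-membership. If $c\in\Nrd^\ast_{Q_1}=\D_K\lla a,b\rra$, then $\lla a,b,c\rra=\lla a,b\rra\perp -c\lla a,b\rra$ would be isotropic, which is excluded. For $Q_2$ I use the Pfister identity $\lla x,y\rra\simeq\lla x,-xy\rra$, which gives $\lla c,-ac\rra\simeq\lla c,a\rra$. Therefore $\lla -ac,b,c\rra\simeq\lla a,b,c\rra$ is anisotropic. By the same argument as for $Q_1$, $c\notin\D_K\lla -ac,b\rra=\Nrd^\ast_{Q_2}$.

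Then existence of $(A,\s)$. Put $(B,\tau)=(Q_1,\can)\otimes(Q_2,\can)$. Since $B\sim Q$ and $\deg B=4$, we have $B\simeq M_2(Q)$. Hence $\tau$ is adjoint to a $2$-dimensional hermitian form $h_1$ over $(Q,\gamma)$. Likewise $\gamma$ is adjoint to $h_2=\la 1\ra$. Then $A=M_3(Q)$ with $\s$ adjoint to $h_1\perp h_2$ is an orthogonal sum as required, with $\deg A=6$. This identification of $\tau$ with a hermitian form over $(Q,\gamma)$ is the step I expect to need the most care, though it is standard via \cite[\S4]{KMRT98}.

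Finally, for any such $(A,\s)$, \Cref{T:MerkProp9} applies. It gives $\G^+(A,\s)=\N^\ast_{L/K}\cap(\Nrd^\ast_{Q_1}\cdot\Nrd^\ast_{Q_2})$, which contains $c$. It also gives $\H(A,\s)=\N^\ast_{L/K}\cap\Nrd^\ast_{Q_1}$, which does not contain $c$. By the isomorphism in \Cref{T:MerkProp9}, ${\bf PSim}^+(A,\s)(K)/R\neq\{1\}$.
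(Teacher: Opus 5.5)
Your proof follows the same route as the paper. You verify directly that $c\in\N^\ast_{L/K}\cap(\Nrd^\ast_{Q_1}\cdot\Nrd^\ast_{Q_2})$ and $c\notin\Nrd^\ast_{Q_1}\cup\Nrd^\ast_{Q_2}$, then read off $\G^+(A,\s)\neq\H(A,\s)$ from \Cref{T:MerkProp9}.

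You fill in several points the paper leaves implicit, and these steps are correct:
\begin{enumerate}
\item the identity $\lla -ac,b,c\rra\simeq\lla a,b,c\rra$, which handles $Q_2$;
\item the explicit involution $\gamma=\mathrm{Int}(i)\circ\can_Q$ and its discriminant;
\item an explicit $(A,\s)$, namely $M_3(Q)$ with the involution adjoint to $h_1\perp\la 1\ra$.
\end{enumerate}
The construction in the last point gives the existence claim directly. This is more to the point than the paper's appeal to \Cref{D3-ex-construction}, which decomposes a given $(A,\s)$ rather than producing one.

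There is one faulty inference. Anisotropy of $\la 1,-c\ra$ gives $c\notin\sq{K}$, not $-c\notin\sq{K}$. For example, over $\rr$ with $a=b=c=-1$, the form $\lla a,b,c\rra$ is anisotropic, yet $-c$ is a square.

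You do need $-c\notin\sq{K}$, because \Cref{T:MerkProp9} requires $L$ to be a quadratic field extension. What supplies it is the hypothesis $-1\in\D_K\lla a,b\rra$. If $-c\in\sq{K}$, then $c=(-1)\cdot(-c)\in\Nrd^\ast_{Q_1}$, contradicting the non-membership you prove a few lines later.

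With this repair the proof is complete.
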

\begin{proof}
Note that $c=\N_{L/K}(\sqrt{-c})\in\N_{L/K}^\ast$, $-1,-a\in \Nrd_{Q_1}^\ast$ and $ac\in \Nrd_{Q_2}^\ast$. 
Therefore $c\in\N_{L/K}^\ast\cap (\Nrd_{Q_1}^\ast\cdot\Nrd_{Q_2}^\ast)$.
On the other hand, since $\lla a,b,c \rra$ is anisotropic, we have that $c\notin\Nrd_{Q_1}^\ast\cup\Nrd_{Q_2}^\ast$.
The statement now follows by \Cref{D3-ex-construction} and \Cref{T:MerkProp9}.
\end{proof}

For a valuation $v$ on $K$, we denote by $vK$ its value group and by $Kv$ its residue field.

\begin{cor}
    Let $a,b,t\in \mg{K}$ and assume that there exists a valuation $v$ on $K$ with $v(2)=v(a)=v(b)=0$,  and $v(t)\notin 2vK$.
    Assume that $a$ is a sum of two squares in $K$ and that the $Kv$-quaternion algebra
    $(\ovl{\vphantom{b}a},\ovl{b})_{Kv}$ is not split.
    Set $Q_1=(a,b)_{K}$, $Q_2=(-at,b)_K$ and $L=K(\sqrt{-t})$.
    Then $$t\in \N^\ast_{L/K}\cap \left(\N_{L/K}^\ast\cap(\Nrd_{Q_1}^\ast\cdot \Nrd_{Q_2}^\ast)\right)\setminus  (\Nrd^\ast_{Q_1}\cup\Nrd^\ast_{Q_2})\,.$$ 
\end{cor}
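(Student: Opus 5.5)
This is the special case $c=t$ of the first assertion of \Cref{P:3-Pfister-A3-example}. The plan is to verify its two hypotheses: $-1\in\D_K\lla a,b\rra$, and anisotropy of $\lla a,b,t\rra$. The intersection with $\N^\ast_{L/K}$ in the displayed formula is redundant, so the conclusion coincides with the one proved there. To stay self-contained, I would repeat the short membership argument from that proof. First, $t=\N_{L/K}(\sqrt{-t})$. Next, $-1,-a\in\Nrd^\ast_{Q_1}$ and $at\in\Nrd^\ast_{Q_2}$, so $t=(-a)(-1)^{-1}\cdots$; more precisely, $t=(-a)\cdot(-1)\cdot(at)\cdot a^{-2}\cdot(-1)^{2}$, up to squares. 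Since $\Nrd^\ast$ contains $\sq{K}$, this shows $t\in \Nrd^\ast_{Q_1}\cdot\Nrd^\ast_{Q_2}$. Here $-a\in\Nrd^\ast_{Q_1}$ because $-a$ is the reduced norm of a pure quaternion $i$ with $i^2=a$. The element $-1$ is handled below.

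\emph{Step 1: $-1\in\D_K\lla a,b\rra$, i.e.\ $-1\in\Nrd^\ast_{Q_1}$.} Write $a=x^2+y^2$. Then $\la 1,-a\ra\simeq\la 1,-1\ra\perp\cdots$; more usefully, $(a,-1)_K$ is split, so $-1\in\D_K\la 1,-a\ra$. Hence $-1$ is represented by $\la1,-a\ra\subseteq\lla a,b\rra=\la 1,-a,-b,ab\ra$, and therefore $-1\in\Nrd^\ast_{Q_1}$. (In the usual notation $\lla a,b\rra$ is the norm form of $(a,b)_K$, whichever sign convention is used.)

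\emph{Step 2: anisotropy of $\lla a,b,t\rra$.} This is the main step. I would write $\lla a,b,t\rra=\lla a,b\rra\perp -t\lla a,b\rra$. All entries of $\lla a,b\rra$ are $v$-units, and the residue form is $\lla\ovl a,\ovl b\rra$, the norm form of $(\ovl a,\ovl b)_{Kv}$. That residue form is anisotropic because the quaternion algebra is not split and $\mathrm{char}\,Kv\neq2$ (since $v(2)=0$). Since $v(t)\notin 2vK$, Springer's theorem for valued fields applies. Concretely, suppose $\vf(x)=t\vf(y)$ with $\vf=\lla a,b\rra$. Anisotropy of the residue form gives $v(\vf(x))=2\min_i v(x_i)\in 2vK$, and likewise for $y$. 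Hence the left side has value in $2vK$, while the right side has value in $v(t)+2vK$, which is disjoint from $2vK$ unless $x=y=0$. So $\lla a,b,t\rra$ is anisotropic.

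\emph{Step 3: conclusion.} The exclusion $t\notin\Nrd^\ast_{Q_1}\cup\Nrd^\ast_{Q_2}$ follows exactly as in \Cref{P:3-Pfister-A3-example}. If $t\in\Nrd^\ast_{Q_1}$, then $\lla a,b,t\rra$ would be hyperbolic. If $t\in\Nrd^\ast_{Q_2}$, use $\lla -at,b\rra\simeq$ a form with $\lla -at,b,t\rra\simeq\lla a,b,t\rra$ up to the relation $\lla x,t\rra\simeq\lla -xt,t\rra$; this again forces $\lla a,b,t\rra$ to be isotropic. The main obstacle is Step 2, namely checking carefully that the valuation argument needs only $v(2)=0$ and a residue-anisotropic unit form. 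I would make it rigorous via the value computation above. Step 3 would simply be cited from the proposition.
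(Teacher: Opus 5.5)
Your proposal is correct and follows the paper's proof: it checks the two hypotheses of \Cref{P:3-Pfister-A3-example} with $c=t$, obtaining $-1\in\D_K\lla a,b\rra$ from $a$ being a sum of two squares and the anisotropy of $\lla a,b,t\rra$ from the valuation hypotheses (which the paper phrases as $t\notin\Nrd^\ast_{Q_1}$). Your Step 2 spells out the value computation behind that one-line claim, and Step 3 fills in the $Q_2$ case via $\lla -at,t\rra\simeq\lla a,t\rra$, which the proposition leaves implicit; you should also delete the abandoned half-sentences in Steps 1 and 3.
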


\begin{proof}
Since $v(t)\notin 2 v K$ and $(\ovl{\vphantom{b}a},\ovl{b})_{Kv}$ is not split, we have that $t\notin\Nrd_{Q_1}^\ast$, so the quadratic form $\lla a,b,t\rra$ over $K$ is anisotropic.
Since $a$ is a sum of two squares, we have $-1\in\D_K\lla a,b\rra$.
We conclude by \Cref{P:3-Pfister-A3-example}, taking $c=t$.
\end{proof}

As a special case we retrieve \cite[Example 6.1]{PS17}. It partially inspired this note.

\begin{ex}
Let $K=\mathbb{Q}_p(t)$ for an odd prime number $p$.
Let $u\in \zz$ be a non-square modulo $p$.
Then $u$ is a sum of two squares in $K$ and 
$\lla p, u, t\rra$ is anisotropic over $K$ and equal to $\lla p,u,-pt\rra$.
Let $Q_1=(u,p)_K$, $Q_2=(u,t)_K$, $Q=(u,pt)_K$, $L=K(\sqrt{pt})$ and let $\gamma$ be an orthogonal involution on $Q$ with $\disc(\gamma)=pt\sq{K}$.
Hence, with \Cref{P:3-Pfister-A3-example}, we find an algebra with orthogonal involution $(A,\s)$ of degree $6$ such that $-pt\in\G(A,\s)\setminus\H(A,\s)$, whereby ${\bf PSim}^+(A,\s)(K)\neq \{1\}$.
\end{ex}

The same construction can be applied to obtain similar examples over any field admitting an anisotropic torsion $3$-fold Pfister form.

\begin{thm}\label{T:main}
Assume that there exists an anisotropic torsion $3$-fold Pfister form over $K$.
Then there exists a $K$-algebra with orthogonal involution $(A,\s)$ of degree $6$ such that ${\bf PSim}^+(A,\s)(K)/R\neq\{1\}$.    
\end{thm}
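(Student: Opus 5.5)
The plan is to reduce \Cref{T:main} to \Cref{P:3-Pfister-A3-example}. That means producing $a,b,c\in\mg{K}$ such that $\lla a,b,c\rra$ is anisotropic and $-1\in\D_K\lla a,b\rra$. Let $\pi=\lla x,y,z\rra$ be the given anisotropic torsion $3$-fold Pfister form. The task is to rewrite $\pi$ in a form whose first two slots carry a $2$-fold Pfister subform representing $-1$.

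The first step is to use torsion. A Pfister form $\pi$ is torsion exactly when $2^n\pi$ is hyperbolic for some $n$, that is, when $\lla -1,\dots,-1\rra\otimes\pi$ is hyperbolic. I would use the known fact that for an anisotropic torsion Pfister form, the level of its function field is finite. Better, I would use the following classical result (Becher's, or Elman--Lam--Prestel via the Pfister local--global principle): an anisotropic torsion $n$-fold Pfister form with $n\geq 1$ has a ``torsion $1$-fold slot''. This means one can write $\pi\simeq\lla a_1,\dots,a_n\rra$ with $a_1$ a sum of squares. More precisely, I would aim for a decomposition $\pi\simeq\lla a,b,c\rra$ in which $\lla a\rra$, or at least $\lla a,b\rra$, is itself torsion.

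The second step turns this into $-1\in\D_K\lla a,b\rra$. If $\lla a,b\rra$ is a torsion $2$-fold Pfister form, I would argue as follows. Since $\pi$ is anisotropic, $\lla a,b\rra$ is anisotropic. Its pure part $\la -a,-b,ab\ra$ is torsion in the Witt group, so $\lla a,b\rra$ becomes hyperbolic over every real closure. Working with the quaternion algebra $(a,b)_K$, which is split at all orderings, the Pfister local--global principle should give that $\lla a,b\rra\otimes\lla -1\rra$ is hyperbolic once $\lla a,b\rra$ is torsion of $2$-power order. Hyperbolicity of $\lla -1,a,b\rra$ then implies that $-1\in\D_K\lla a,b\rra$, since $\lla a,b\rra$ is round and $\lla -1\rra\otimes\vf$ is hyperbolic exactly when $1\in\D(-\vf)$... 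I would check this sign convention carefully. Alternatively, the slot version is easier: if $a$ is a sum of squares that is not a square, one can try to arrange $a$ to be a sum of two squares. Then $-1\in\D_K\lla a\rra\subseteq\D_K\lla a,b\rra$ directly, which matches how the Corollary uses the condition.

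The third step is to apply \Cref{P:3-Pfister-A3-example} with these $a,b,c$. It yields $(A,\s)$ of degree $6$ with ${\bf PSim}^+(A,\s)(K)/R\neq\{1\}$.

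I expect the main obstacle to be the first step: extracting a slot decomposition in which the first $2$-fold part represents $-1$. Only torsion of the whole $3$-fold form is given, so this is not formal. The route I would try first is to invoke the known theorem that a torsion Pfister form $\pi$ satisfies $\pi\simeq\lla -1\rra$-divisible in a suitable sense, namely $\pi\simeq\lla s,\ast,\ast\rra$ with $s\in\D_K(2^m\la1\ra)$. I would then reduce $s$ to a sum of two squares by replacing $K$-slots through the standard chain-$P$-equivalence lemma. If that reduction fails, the fallback is the $2$-fold torsion argument above: pass through the level, using that $\lla -1\rra\otimes\pi$ hyperbolic forces a common slot with $\lla -1\rra$.
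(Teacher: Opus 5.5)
Your overall reduction matches the paper's: find $a,b,c\in\mg{K}$ with $\lla a,b,c\rra$ anisotropic and $a$ a sum of two squares, then apply \Cref{P:3-Pfister-A3-example}. The gap is in how you obtain these elements. You try to rewrite the \emph{given} form $\pi$, and this cannot work in general. If $\pi\simeq\lla a,b,c\rra$ with $-1\in\D_K\lla a,b\rra$, then $\pi$ represents $-1$. Since $\pi$ is a Pfister form, $-1$ is then a similarity factor of $\pi$, so $\lla -1\rra\otimes\pi\simeq\pi\perp\pi$ is hyperbolic. Torsion only gives $2^k\pi=0$ for some $k$.

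For example, over a field of level $16$ (such fields exist by Pfister), $\pi=\lla-1,-1,-1\rra=8\times\la1\ra$ is anisotropic and torsion, but does not represent $-1$. So no slot decomposition of $\pi$ has the property you need. Chain $P$-equivalence never leaves the isometry class of $\pi$, so it cannot help. The result you quote, giving a slot that is a sum of squares, is vacuous here because $K$ is nonreal.

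Your second step is false as stated. A torsion $2$-fold form need not become hyperbolic after multiplying by $\lla-1\rra$: take $\lla-1,-1\rra$ over a field of level $8$. Your fallback assumes that $\lla-1\rra\otimes\pi$ is hyperbolic, which is exactly what is missing.

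The missing idea is to \emph{change the form}.
\begin{enumerate}
\item Let $k\geq 1$ be minimal with $2^k\pi=0$ in $W(K)$. Put $\rho=\lla-1,\ldots,-1\rra\otimes\pi$, with $k-1$ slots $-1$.
\item By minimality, $\rho$ is an anisotropic $(k+2)$-fold Pfister form, and $\lla-1\rra\otimes\rho=2^k\pi$ is hyperbolic. Hence $-1\in\D_K(\rho)$.
\item Write $-1=x^2+y$ with $y$ represented by the pure subform of $\rho$ or $y=0$. If $y=0$, then $-1$ is a square, every element is a sum of two squares, and any slot decomposition works.
\item Otherwise, the pure subform theorem gives $\rho\simeq\lla 1+x^2\rra\otimes\tau$ with $\tau\simeq\lla b,c,\ldots\rra$. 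The subform $\lla 1+x^2,b,c\rra$ of $\rho$ is then an anisotropic $3$-fold Pfister form whose first slot $1+x^2$ is a sum of two squares.
\end{enumerate}
This is what the paper takes from Lam's book (Chap.~XI, Thm.~4.5). After it, \Cref{P:3-Pfister-A3-example} finishes the proof exactly as you planned.
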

\begin{proof}
    By \cite[Chap.~XI, Theorem 4.5]{Lam05}, the hypothesis implies that there exist $a,b,c\in\mg{K}$ such that $a$ is a sum of two squares in $K$ and the $3$-fold Pfister form $\lla a,b,c\rra$ over $K$ is anisotropic.
We set 
$Q_1=(a,b)_K$, $Q_2=(-ac,b)_K$, $Q=(-c,b)_K$ and $L=K(\sqrt{-c})$.
It follows by \Cref{P:3-Pfister-A3-example}  that
$$\N_{L/K}^\ast\cap(\Nrd_{Q_1}^\ast\cdot \Nrd_{Q_2}^\ast)\neq \N^\ast_{L/K}\cap \Nrd^\ast_{Q_1}\,.$$
Since $L$ is a subfield of $Q$, the nontrivial automorphism of $L/K$ extends to an  orthogonal involution $\gamma$ on $Q$. 
We take a $K$-algebra with orthogonal involution $$(A,\s)\in (Q_1,\can_{Q_1})\otimes (Q_2,\can_{Q_2})\boxplus (Q,\gamma)\,.$$
Then it follows by \Cref{T:MerkProp9} that ${\bf PSim}^+(A,\s)(K)/R\neq \{1\}$.
\end{proof}

In the case where $K$ is nonreal, we can now characterize when exactly all proper projective similitudes are $R$-trivial.

\begin{cor}
    Assume that $K$ is nonreal. Then ${\bf PSim}^+(A,\s)(K)/R=\{1\}$ for every central simple $K$-algebra with orthogonal involution $(A,\s)$ of degree $6$ if and only if $\I^{3}K=0$.    
\end{cor}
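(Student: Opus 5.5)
We prove the two implications separately. If $\I^3K\neq 0$, there is an anisotropic $3$-fold Pfister form over $K$, since $\I^3K$ is additively generated by $3$-fold Pfister forms and a nonzero element of $\I^3K$ cannot be a sum of hyperbolic ones. Because $K$ is nonreal, the Witt ring $WK$ is a torsion group, so every Pfister form is torsion. \Cref{T:main} then gives a degree $6$ algebra with orthogonal involution $(A,\s)$ with ${\bf PSim}^+(A,\s)(K)/R\neq\{1\}$.

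For the converse, assume $\I^3K=0$ and let $(A,\s)$ be of degree $6$. By \Cref{Mer:T1} we must show $\G^+(A,\s)=\H(A,\s)$. The inclusion $\H\subseteq\G^+$ is part of \Cref{Mer:T1}, so we take $x\in\G^+(A,\s)$ and show $x\in\H(A,\s)$. First we dispose of easy cases.
\begin{enumerate}[(i)]
\item If $\disc(\s)$ is trivial, then $C=\Cl(A,\s)\simeq C_1\times C_2$. Here $\I^3K=0$ forces $A$ to have index at most $2$: Merkurjev's theorem gives $A$ Brauer equivalent to a biquaternion algebra, and $\I^3K=0$ implies every Albert form is isotropic. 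In this case $(A,\s)$ corresponds to an $A_1\times A_1$ situation, where $R$-triviality is known from \cite{Mer96} or the split analysis there.
\item If $\ind C$ is $1$ or $4$, we invoke \cite[Theorem~3]{Mer96}, which characterizes $R$-triviality in type $A_3\equiv D_3$ by invariants. Its obstruction requires a nontrivial class in $\G^+/\H$, which it expresses through symbols of degree $3$. These symbols vanish when $\I^3K=0$, via the Arason/Merkurjev identification $\I^3K/\I^4K\simeq H^3(K)$.
\end{enumerate}

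The main remaining case is $\disc(\s)$ nontrivial, $\ind C=2$ and $A$ nonsplit. Here \Cref{D3-ex-construction} puts $(A,\s)$ in the form of \Cref{T:MerkProp9}, so
\[
{\bf PSim}^+(A,\s)(K)/R\simeq\bigl(\N_{L/K}^\ast\cap\Nrd_{Q_1}^\ast\Nrd_{Q_2}^\ast\bigr)/\bigl(\N_{L/K}^\ast\cap\Nrd_{Q_1}^\ast\bigr).
\]
We need $\Nrd_{Q_1}^\ast\Nrd_{Q_2}^\ast=\Nrd_{Q_1}^\ast$. Since $\I^3K=0$, every $3$-fold Pfister form $\lla \alpha,\beta,y\rra$ is hyperbolic, so every $y\in\mg{K}$ is represented by the norm form of any quaternion algebra. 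Hence $\Nrd_{Q_k}^\ast=\mg{K}$ for $k=1,2$, and both groups equal $\N_{L/K}^\ast$. The split case of $A$ is handled the same way using \cite[Theorem~2]{Mer96}: $\Cl$ is a quaternion algebra over $L$, hence not a division algebra of the relevant kind, because its norm-type obstruction again lives in $\I^3$.

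The main obstacle is making the reduction in the first two cases airtight. What actually carries the argument is the observation that $\I^3K=0$ makes all reduced norm groups of quaternion algebras equal to $\mg{K}$. I would aim to phrase the whole converse through this observation, combined with Merkurjev's description of $\G^+$ and $\H$ in each configuration.
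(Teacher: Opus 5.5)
Your proof that $\I^3K\neq 0$ yields a counterexample is correct and is the paper's argument. The converse has a genuine gap in your case (ii), namely when $\disc(\s)$ is nontrivial and $\ind C=4$.

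There you appeal to \cite[Theorem~3]{Mer96}. That theorem, like \cite[Theorem~2]{Mer96}, characterizes $R$-triviality of the group, i.e.\ $\G^+=\H$ over \emph{every} field extension $K'/K$. You need something different: the equality $\G^+(A,\s)=\H(A,\s)$ over $K$ itself. When $C$ is a division algebra the group is in general \emph{not} $R$-trivial; for $A$ split this is exactly \cite[Theorem~2]{Mer96}. Once such a criterion fails, it says nothing about $\mathbf{PSim}^+(A,\s)(K)/R$. Moreover, the hypothesis $\I^3K=0$ is not inherited by extensions such as $K(t)$, so it cannot be fed into a criterion over all $K'$ either. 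Your assertion that the obstruction consists of degree-$3$ symbols vanishing over $K$ is not substantiated, and it would not give the element-wise equality you need anyway. Your key observation, $\Nrd_Q^\ast=\mg{K}$ for every quaternion algebra $Q$, does not reach this case either, since \Cref{T:MerkProp9} is only available when $\ind C=2$. With $C$ a degree-$4$ division algebra over $L$, you would have to show directly that every element of $\G^+(A,\s)$ is a square times a product of norms from extensions over which $\s$ becomes hyperbolic. The case $\ind C=1$ is likewise only asserted; it is easy, since $\s_L$ is then hyperbolic, so $\N^\ast_{L/K}\subseteq\H(A,\s)$, while $\G^+(A,\s)\subseteq\N^\ast_{L/K}$. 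The paper avoids all of this with \cite[Prop.~3.2]{KP08}: if $\I^3K=0$, then $\G(A,\s)=\H(A,\s)$ for \emph{every} algebra with orthogonal involution over $K$. Since $\H\subseteq\G^+\subseteq\G$, this forces $\G^+=\H$, and \Cref{Mer:T1} gives $\mathbf{PSim}^+(A,\s)(K)/R=\{1\}$ with no case distinction. That direct statement over $K$ is the ingredient your proposal is missing.

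Case (i) also needs two corrections.
\begin{enumerate}
\item The claim that $\I^3K=0$ makes every Albert form isotropic is false in general: there are nonreal fields with $\I^3K=0$ carrying biquaternion division algebras, e.g.\ Merkurjev's fields of $u$-invariant $6$. You do not need it, because a degree-$6$ algebra with involution of the first kind has exponent at most $2$. Its index is then a power of $2$ dividing $6$, hence at most $2$.
\item Trivial discriminant in degree $6$ is not an ``$A_1\times A_1$ situation''. Here $C=C_+\times C_-$ with $\deg C_\pm=4$, the group is of inner type $A_3$, and $\mathbf{PSim}^+(A,\s)\simeq\mathbf{PGL}_1(C_+)$. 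This group is rational, hence $R$-trivial.
\end{enumerate}
Your treatment of $\ind C=2$ is correct: for nonsplit $A$ via \Cref{T:MerkProp9} together with $\Nrd_{Q_k}^\ast=\mg{K}$, and for split $A$ via \cite[Theorem~2]{Mer96}, since $C$ is then not a division algebra.
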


\begin{proof}
    Note that $\I^3K$ is torsion, because $K$ is nonreal.
    If $\I^3K\neq 0$, then by \Cref{T:main} there exists a central simple $F$-algebra with orthogonal involution $(A,\s)$ of degree $6$ such that ${\bf PSim}^+(A,\s)(K)/R\neq\{1\}$.   
    If $\I^{3}K=0$, then by \cite[Prop.~3.2]{KP08}, every central simple $K$-algebra with orthogonal involution $(A,\s)$ satisfies $\G(A,\s)=\H(A,\s)$ and consequently ${\bf PSim}^+(A,\s)(K)/R=\{1\}$.    
\end{proof}

\subsection*{Acknowledgments}
This work was supported by the \emph{Bijzonder Onderzoeksfonds, University of Antwerp} (project \emph{BOF Opvang MSCA IF}, ID 51418).

\section*{Declarations}

\subsection*{Data availability}
There is no associated data, not  contained in the article.
\\[-6mm]

\subsection*{Conflict of interest} The authors declare that there is no conflict of interest.

\bibliographystyle{plain}

\begin{thebibliography}{HH}
\bibitem{AB25} 
M. Archita, K.J. Becher. 
{Rational connectedness for groups of proper projective similitudes}. 
\emph{Preprint (2025)},
\url{https://doi.org/10.48550/arXiv.2506.21717}


\bibitem{AB26} 
M. Archita, K.J. Becher. 
{Similitudes over fields with $I^4=0$}. 
\emph{Preprint (2026)},\\
\url{https://doi.org/10.48550/arXiv.2602.22147}

\bibitem{AP22} M. Archita, R. Preeti.
Rational equivalence on adjoint groups of type $D_{n}$ over fields of virtual cohomological dimension $2$.
\emph{Trans. Amer. Math. Soc.} 375 (2022), 7373--7384.



\bibitem{Bha14} N.~Bhaskhar. 
More examples of non-rational adjoint groups. 
\emph{J.~Algebra} 397 (2014), 39--46. 

 

\bibitem{BMT04} G.~Berhuy, M.~Monsurrò, J.-P.~Tignol. 
Cohomological invariants and $R$-triviality of adjoint classical groups. \emph{Math.~Z.} 248 (2004), 313--323.

\bibitem{Gil97}
P. Gille. 
Examples of non-rational varieties of adjoint groups. \emph{J. Algebra} 193 (1997), 728--747.

\bibitem{KMRT98} M. Knus, A. Merkurjev, M. Rost, J.-P. Tignol. \emph{The Book of Involutions}.
Amer. Math. Soc. Colloq. Publ., 44
American Mathematical Society, Providence, RI, 1998.

\bibitem{KP08}  A. Kulshrestha, R. Parimala. \emph{$R$-equivalence in adjoint classical groups over fields of virtual cohomological dimension 2}. \emph{Trans. Amer. Math. Soc.} 360 (2008), 1193--1221.  
 
\bibitem{Lam05} T.Y.~Lam. \newblock \emph{Introduction to quadratic forms over fields}.
Grad. Stud. Math., 67
American Mathematical Society, Providence, RI, 2005.
	

 
\bibitem{Mer96} A.~S. Merkurjev.
$R$-equivalence and rationality problem for semisimple adjoint classical algebraic groups. \emph{Publ.~Math.~de l'IH\'ES} 84.1 (1996), 189--213. 

 

\bibitem{PS15}  R.~Preeti, A.~Soman.
Adjoint groups over $\mathbb{Q}_p(X)$ and $R$-equivalence.
\emph{J. Pure Appl. Algebra} 219 (2015), 4254--4264.

\bibitem{PS17}  R.~Preeti, A.~Soman. 
Adjoint groups over $\mathbb{Q}_p(X)$ and R-equivalence -- revisited. \emph{Proc. Amer. Math. Soc.} 145.3 (2017), 1019--1029.

\bibitem{PTW12} R.~Parimala, J.-P.~Tignol, R.M.~Weiss. The Kneser-Tits conjecture for groups with Tits-index over an arbitrary field. \emph{Transformation Groups} 17.1 (2012), 209--231.

       
		
\bibitem{Wei61} A. Weil. 
Algebras with involutions and the classical groups. \emph{J. Ind. Math. Soc.} 24 (1961), 589--623.

\end{thebibliography}

\end{document}